\newtheorem{theorem}{Theorem}
\newtheorem{lemma}{Lemma}
\newtheorem{remark}{Remark}
\newtheorem{definition}{Definition}
\author{Daniel Chun}
\title{Asymptotic Syzygies of Normal Crossing Varieties}
\date{}
\begin{document}

\maketitle

\begin{abstract}
Asymptotic syzygies of a normal crossing variety follow the same vanishing behavior as one of its smooth components, unless there is a cohomological obstruction arising from how the smooth components intersect each other. In that case, we compute the asymptotic syzygies in terms of the cohomology of the simplicial complex associated to the normal crossing variety.

We combine our results on normal crossing varieties with knowledge of degenerations of certain smooth projective varieties to obtain some results on asymptotic syzygies of those smooth projective varieties.
\end{abstract}

\subsection*{\centering Introduction}

Mark Green's paper [1] introduced a way to interpret syzygies of projective varieties as the cohomology groups of a Koszul type complex. This interpretation allowed for concrete computational results about syzygies that were not possible before. In particular, Lawrence Ein and Robert Lazarsfeld have established interesting results on vanishing and non-vanishing of asymptotic syzygies, as in [5] and [6]. Here, 'asymptotic' refers to the fact that they investigated the syzygies of smooth projective varieties of 'large enough' degree embedding.

Recently, Ziv Ran has extended some of these results to the case of nodal, possibly reducible, curves in [7]. In this paper, we try to generalize his results by analzying the case of normal crossing varieties of arbitrary dimension.

As one would expect intuitively, we find that the asymptotic syzygies of normal crossing varieties depend on the 'worst behaved' smooth component as well as the combinatorics of how the smooth components intersect each other. We use the knowledge of asymptotic syzygies of normal crossing varieties and degenerations in order to answer questions about asymptotic syzygies of smooth varieties in the Applications section.

Let's first go over some background material. Let $X$ be a projective variety of dimension $n$ defined over $\mathbb{C}$. Let $\mathscr{L}$ be a very ample line bundle on $X$, and let $\mathscr{B}$ be an arbitrary line bundle on $X$. $\mathscr{L}$ defines an embedding

\vspace{0.2in}

{\centering

$X \subseteq \mathbb{P}^r = \mathbb{P}H^0(\mathscr{L})$,

}

\vspace{0.2in}

where $r = h^0(\mathscr{O}_X(\mathscr{L}))$. The study of asymptotic syzygies is the study of syzygies of $X$ in $\mathbb{P}^r$ when $\mathscr{L}$ is very positive. Let $S = \textrm{Sym}H^0(\mathscr{L})$ be the homogeneous coordinate ring of $\mathbb{P}^r$, and let $\displaystyle R = \bigoplus_q H^0(\mathscr{B} \otimes q\mathscr{L}))$, and view $R$ as an $S$-module. $R$ has a minimal graded free resolution $E_{\bullet} = E_{\bullet}(X, \mathscr{B}, \mathscr{L})$,

\vspace{0.2in}

{\centering

$0 \rightarrow E_r = \bigoplus S(-a_{r,j}) \rightarrow ... \rightarrow E_1 = \bigoplus S(-a_{1,j}) \rightarrow E_0 = \bigoplus S(-a_{0,j}) \rightarrow R \rightarrow 0$

}

\vspace{0.2in}

Since Green's paper [1], much attention has been focused on what we can say about the syzygies in the above resolution, i.e. the degree $a_{p,j}$ of the generators of the $p^{th}$ module of syzygies of $R$. If $\mathscr{B} = \mathscr{O}_X$ and $\mathscr{L}$ is normally generated, then the above determines a resolution of the homogeneous ideal $I_X$ of $X$ in $\mathbb{P}^r$. In that case, the question is about the degrees of the equations defining $X$ and the syzygies among them. So, for example, if $a_{1, j}$ are all equal to 2, then $I_X$ is generated by quadrics, and if in addition, $a_{2, j}$ are all equal to 3, the syzygies among the quadratic generators are linear, and so on and so forth.

Green's main insight in [1] was to interpret these syzygies as the cohomology groups of a Koszul-type complex.

\vspace{0.2in}

\begin{definition}

Set $V = H^0(\mathscr{L})$. Let $K_{p,q}(X, \mathscr{B}, \mathscr{L})$ denote the cohomology of the Koszul-type complex

\vspace{0.2in}

{\centering

$\displaystyle ... \rightarrow \bigwedge^{p+1} V \otimes H^0((\mathscr{B} \otimes (q-1)\mathscr{L}_d)) \stackrel{\partial}{\rightarrow} \bigwedge^{p} V \otimes H^0((\mathscr{B} \otimes q\mathscr{L}_d)) \rightarrow \bigwedge^{p-1} V \otimes H^0((\mathscr{B} \otimes (q+1)\mathscr{L}_d)) \rightarrow ...$

}

\vspace{0.2in}

where the differential is given by $\displaystyle \partial (v_0 \wedge ... \wedge v_p \otimes m) = \sum_{k=0}^{p} (-1)^k v_0 \wedge ... \wedge \hat{v_k} \wedge ... \wedge v_p \otimes v_k m$.

\end{definition}

\vspace{0.2in}

Then, we get that $E_p (X, \mathscr{B}, \mathscr{L}) = \bigoplus K_{p,q}(X, \mathscr{B}, \mathscr{L}) \otimes_{\mathbb{C}} S(-p-q)$. In other words, $K_{p,q}(X, \mathscr{B}, \mathscr{L})$ is the $p^{th}$ syzygy of weight $q$ of the variety $X$ projectively embedded by $V$. If $\mathscr{B} = \mathscr{O}_X$, we omit $\mathscr{B}$ and write $K_{p,q}(X, \mathscr{L})$.

Using this interpretation, Green proved in [1] that for a smooth curve $C$ of genus $g$ and a line bundle $\mathscr{L}$ of degree $d$ on $C$,

\vspace{0.2in}

{\centering

$K_{p,q}(C, \mathscr{L}) = 0$ for $q \geq 3$ if $h^1(\mathscr{L}) = 0$, and

}

{\centering

$K_{p,2}(C, \mathscr{L}) = 0$ if $d \geq 2g + 1 + p$

}

\vspace{0.2in}

For higher dimensional varieties, the picture is more complicated, but there are still quite a few established results. For example, let $X$ be an abelian variety of dimension $n \geq 3$, $\mathscr{L}$ an ample line bundle on $X$, $a$ an integer with $a \geq 2$, and $\mathscr{B}$ a line bundle on $X$ such that $b\mathscr{L} - \mathscr{B}$ is ample for some integer $b \geq 1$. Set $r_a = h^0(a\mathscr{L}) - 1$, and assume $a \geq b$. M. Aprodu and L. Lombardi prove in [12] that

\vspace{0.2in}

{\centering

$K_{p,1}(X, \mathscr{B}, a\mathscr{L}) = 0$ for $p$ in the range $r_a - a(n-1) + b(1-\frac{1}{a}) \leq p \leq r_a - n$

}

\vspace{0.2in}

Our two main computational results in this paper have similar flavor to the above two established results. We prove that

\vspace{0.2in}

\textbf{Theorem 1} Let $X$ be a a very general smooth Calabi-Yau $n$-hypersurface in $\mathbb{P}^{n+1}$. Then, $K_{p, q}(X, \mathscr{O}_X (d)) = 0$ if $p \leq (q-1)d-3$

\vspace{0.2in}

and

\vspace{0.2in}

\textbf{Theorem 2} Let $X$ be a very general smooth degree $4a$ hypersurface in $\mathbb{P}^3$ with $a \geq 2$. Then, $X$ is a surface of general type with $K_{p, 1}(X, \mathscr{O}_X(d)) = 0$ if $p \geq h^0(d) - 4d + 4$ where $h^0(d) = h^0(\mathscr{O}_X(d))$

\vspace{0.2in}

\subsection*{\centering Notations}

Let $D = D_1 \cup ... \cup D_a$ be a normal crossing variety of dimension $n$ where the $D_i$ are the smooth irreducible components of $D$. Set $D_{i_0 ... i_p} = D_{i_0} \cap ... \cap D_{i_p}$, and assume all intersections between the components are transversal.

\vspace{0.2in}

Let $\mathscr{B}$ and $\mathscr{P}$ be arbitrary line bundles on $D$. Let $\mathscr{A}$ be an ample line bundle on $D$. Set $\mathscr{L}_d = \mathscr{P} \otimes d\mathscr{A}$ where $d >> 0$, and set $V = H^0(\mathscr{L}_d)$.

\vspace{0.2in}

Set $\displaystyle B^p_q = \bigoplus_{i_0 < ... < i_p} H^0((\mathscr{B} \otimes q\mathscr{L}_d)|_{D_{i_0 ... i_p}})$, and $\displaystyle B^p = \bigoplus_{q \geq 0} B^p_q$. Then $B^p$ is a graded $\displaystyle S(\bigoplus_{i_0 < ... < i_p} H^0(\mathscr{L}_d|_{D_{i_0 ... i_p}}))$-module. Letting $\displaystyle V \rightarrow \bigoplus_{i_0 < ... < i_p} H^0(\mathscr{L}_d|_{D_{i_0 ... i_p}})$ be the natural map induced by restriction maps to each component $D_{i_0 ... i_p}$, we see $B^p$ is also a graded $S(V)$-module by the action of $S(V)$ induced by this map.

\subsection*{\centering Koszul Cohomology of Normal Crossing Varieties}

By [4], we have the following complex of sheaves on $D$, which we will call (*).

\vspace{0.2in}

{\centering

$\displaystyle 0 \rightarrow \bigoplus_{i_0} \mathscr{O}_{D_{i_0}} \rightarrow \bigoplus_{i_0 < i_1} \mathscr{O}_{D_{i_0 i_1}} \rightarrow ... \rightarrow \bigoplus_{i_0 < ... < i_{a-2}} \mathscr{O}_{D_{i_0 ... i_{a-2}}} \rightarrow \mathscr{O}_{D_{1...a}} \rightarrow 0$

}

\vspace{0.2in}

The above complex is exact everywhere except at $\displaystyle \bigoplus_{i_0} \mathscr{O}_{D_{i_0}}$. And the cohomology at that step is $\mathscr{O}_D$.

\vspace{0.2in}

For $d >> 0$ and for any choice of $i_0 < ... < i_p$, we have by Serre vanishing,

\vspace{0.2in}

{\centering

$H^i((\mathscr{B} \otimes q\mathscr{L}_d)|_{D_{i_0 ... i_p}}) = 0 = H^i(\mathscr{B} \otimes q\mathscr{L}_d)$ for $i > 0$, $q >0$ ... (1)

}

\vspace{0.2in}

We also have

\vspace{0.2in}

{\centering 

$H^0((\mathscr{B} \otimes q\mathscr{L}_d)|_{D_{i_0...i_p}}) = 0 = H^0(\mathscr{B} \otimes q\mathscr{L}_d)$ if $q < 0$ ... (2)

}

\vspace{0.2in}

Serre vanishing also gives us $H^1(\mathscr{L}_d (-D_{i_0...i_p})) = 0$ where $\mathscr{L}_d (-D_{i_0...i_p})$ is the coherent sheaf of sections of $\mathscr{L}_d$ which vanish along $D_{i_0...i_p}$, which means

\vspace{0.2in}

{\centering

Restriction map $\phi_{i_0...i_p} : V \rightarrow H^0(\mathscr{L}_d|_{D_{i_0 ... i_p}})$ is surjective ... (3)

}

\vspace{0.2in}

Taking global sections of (*) tensored by $\mathscr{B} \otimes q\mathscr{L}_d$, we get a complex $B^{\bullet}_q$, with

\vspace{0.2in}

{\centering

$H^0(\mathscr{B} \otimes q\mathscr{L}_d) = H^0(B^{\bullet}_q)$ ... (4)

}

\vspace{0.2in}

For any $i > 0$, $q > 0$, we see by (1) that $0 \rightarrow \mathscr{B} \otimes q\mathscr{L}_d \rightarrow$ (*) is an acyclic resolution of $\mathscr{B} \otimes q\mathscr{L}_d$, thus

\vspace{0.2in}

{\centering

For any $i > 0$, $q > 0$, $H^i(B^{\bullet}_q) = H^i(\mathscr{B} \otimes q\mathscr{L}_d) = 0$ ... (5)

}

\vspace{0.2in}

Fix some $l \in \mathbb{N}$ (we will specify later on in this report what value we need $l$ to be). Set $\displaystyle C^{p, q} = \bigwedge^{l-q}V \otimes B^p_q$ to be the double complex with horizontal differentials coming from $(-1)^p$ times the maps for the complex $B^{\bullet}_q$ and vertical differentials coming from the Koszul complex maps.

\vspace{0.2in}

Then we get two spectral sequences $'E$ starting from horizontal differentials and $''E$ starting from vertical differentials, with same abutment. By (4) and (5),

\vspace{0.2in}

{\centering

$'E_2^{0, q} = K_{l-q, q}(D, \mathscr{B}, \mathscr{L}_d)$ and $\displaystyle 'E_2^{p, 0} = \bigwedge^l V \otimes H^p(B^{\bullet}_0)$ for $p >0$

with zeroes everywhere else on the $'E_2$ page ... (6)

}

\vspace{0.2in}

We also have

\vspace{0.2in}

{\centering

$''E_1^{p, q} = K_{l-q, q}(B^p, V)$ ... (7)

}

\vspace{0.2in}

Now, let's start calculating some of the terms in the $''E_1$ page. $K_{l-q, q}(B^p, V)$ is the cohomology at the middle of

\vspace{0.2in}

{\centering

$\displaystyle ... \rightarrow \bigwedge^{l-q+1} V \otimes \bigoplus_{i_0 < ... < i_p} H^0((\mathscr{B} \otimes (q-1)\mathscr{L}_d)|_{D_{i_0 ... i_p}}) \rightarrow \bigwedge^{l-q} V \otimes \bigoplus_{i_0 < ... < i_p} H^0((\mathscr{B} \otimes q\mathscr{L}_d)|_{D_{i_0 ... i_p}}) \rightarrow \bigwedge^{l-q-1} V \otimes \bigoplus_{i_0 < ... < i_p} H^0((\mathscr{B} \otimes (q+1)\mathscr{L}_d)|_{D_{i_0 ... i_p}}) \rightarrow ...$

}

\vspace{0.2in}

The above complex is a direct sum over all $i_0 < ... < i_p$ of complexes of the form

\vspace{0.2in}

{\centering

$\displaystyle ... \rightarrow \bigwedge^{l-q+1} V \otimes H^0((\mathscr{B} \otimes (q-1)\mathscr{L}_d)|_{D_{i_0 ... i_p}}) \rightarrow \bigwedge^{l-q} V \otimes H^0((\mathscr{B} \otimes q\mathscr{L}_d)|_{D_{i_0 ... i_p}}) \rightarrow \bigwedge^{l-q-1} V \otimes  H^0((\mathscr{B} \otimes (q+1)\mathscr{L}_d)|_{D_{i_0 ... i_p}}) \rightarrow ...$

}

\vspace{0.2in}

By (3), $\displaystyle \bigwedge^{l-q} V$ has a filtration with quotients $\displaystyle \bigwedge^j \textrm{ker}\phi_{i_0...i_p} \otimes \bigwedge^{l-q-j} H^0(\mathscr{L}_d|_{D_{i_0...i_p}})$, as $j = 0, ..., h^0(\mathscr{L}_d)-h^0(\mathscr{L}_d|_{D_{i_0...i_p}})$, which induces a filtration on the above complex with quotients each of which is a tensor product of a fixed vector space $\displaystyle \bigwedge^j \textrm{ker}\phi_{i_0...i_p}$ with

\vspace{0.2in}

{\centering

$\displaystyle ... \rightarrow \bigwedge^{l-q+1-j} H^0(\mathscr{L}_d|_{D_{i_0...i_p}}) \otimes H^0((\mathscr{B} \otimes (q-1)\mathscr{L}_d)|_{D_{i_0 ... i_p}}) \rightarrow \bigwedge^{l-q-j} H^0(\mathscr{L}_d|_{D_{i_0...i_p}}) \otimes H^0((\mathscr{B} \otimes q\mathscr{L}_d)|_{D_{i_0 ... i_p}}) \rightarrow \bigwedge^{l-q-1-j} H^0(\mathscr{L}_d|_{D_{i_0...i_p}}) \otimes H^0((\mathscr{B} \otimes (q+1)\mathscr{L}_d)|_{D_{i_0 ... i_p}}) \rightarrow ...$

}

\vspace{0.2in}

Note $K_{l-q-j, q}(D_{i_0...i_p}, \mathscr{B}|_{D_{i_0...i_p}}, \mathscr{L}_d|_{D_{i_0...i_p}})$ is the cohomology at the middle of the above Koszul complex.

\subsection*{\centering A Technical Lemma}

The main lemma is as follows.

\vspace{0.2in}

\begin{lemma}

Suppose for each choice of $q$, we're given a number $s_q$ such that

\vspace{0.2in}

{\centering

$K_{h^0(\mathscr{L}_d|_{D_{i_0...i_p}}) - s, q}(D_{i_0...i_p}, \mathscr{B}|_{D_{i_0...i_p}}, \mathscr{L}_d|_{D_{i_0...i_p}}) = 0$ for all $0 \leq s \leq s_q$ and for any choice of $i_0 < ...< i_p$.

}

\vspace{0.2in}

Then, for any $q$ and $l$ with $0 \leq q \leq n+1$ and $l-q \geq h^0(\mathscr{L}_d)-s_q$, we get $\displaystyle K_{l-q, q}(D, \mathscr{B}, \mathscr{L}_d) = 0$ if $q = 0$ or $1$ and $\displaystyle K_{l-q, q}(D, \mathscr{B}, \mathscr{L}_d) \cong \bigwedge^l V \otimes H^{q-1}(B^{\bullet}_0)$ if $2 \leq q \leq n+1$.

\end{lemma}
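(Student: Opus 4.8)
The plan is to run the comparison of the two spectral sequences ${}'E$ and ${}''E$ attached to $C^{\bullet,\bullet}$, which share the abutment $H^\bullet(\mathrm{Tot}\,C)$. The hypothesis will be used to kill the relevant terms on the ${}''E_1$ page, the sparse shape of ${}'E_2$ recorded in (6) will pin down where $K_{l-q,q}(D,\mathscr{B},\mathscr{L}_d)$ can live, and the comparison of the two filtrations on $H^\bullet(\mathrm{Tot}\,C)$ will produce the stated dichotomy between vanishing and the isomorphism with $\bigwedge^l V \otimes H^{q-1}(B^\bullet_0)$.

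First I would carry out the vanishing on ${}''E_1$. By (7) together with the filtration induced by (3), each summand of ${}''E_1^{p,q}=K_{l-q,q}(B^p,V)$ is an iterated extension whose graded pieces are $\bigwedge^j \ker\phi_{i_0\dots i_p}\otimes K_{l-q-j,\,q}(D_{i_0\dots i_p},\mathscr{B}|_{D_{i_0\dots i_p}},\mathscr{L}_d|_{D_{i_0\dots i_p}})$ for $0\le j\le h^0(\mathscr{L}_d)-h^0(\mathscr{L}_d|_{D_{i_0\dots i_p}})$. Setting $m=l-q-j$, the smallest value of $m$ occurs at the top of the filtration and equals $l-q-h^0(\mathscr{L}_d)+h^0(\mathscr{L}_d|_{D_{i_0\dots i_p}})$, which is $\ge h^0(\mathscr{L}_d|_{D_{i_0\dots i_p}})-s_q$ exactly when $l-q\ge h^0(\mathscr{L}_d)-s_q$. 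Writing $m=h^0(\mathscr{L}_d|_{D_{i_0\dots i_p}})-s$, every relevant Koszul group either has $0\le s\le s_q$ and so vanishes by assumption, or has $m>h^0(\mathscr{L}_d|_{D_{i_0\dots i_p}})$ and so vanishes because the exterior power is zero. Since every graded piece dies, the extension does too, giving ${}''E_1^{p,q}=0$ for every $p$ at each $q$ for which $l-q\ge h^0(\mathscr{L}_d)-s_q$ holds.

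Next I would read off ${}'E$. By (6) the page ${}'E_2$ is supported only on the column $p=0$, where ${}'E_2^{0,q}=K_{l-q,q}(D,\mathscr{B},\mathscr{L}_d)$, and on the row $q=0$, where ${}'E_2^{p,0}=\bigwedge^l V\otimes H^p(B^\bullet_0)$ for $p>0$. Because the page is concentrated on these two axes, the only higher differential that can touch $K_{l-q,q}(D,\mathscr{B},\mathscr{L}_d)$ is $d_q\colon {}'E_q^{q-1,0}=\bigwedge^l V\otimes H^{q-1}(B^\bullet_0)\to {}'E_q^{0,q}=K_{l-q,q}(D,\mathscr{B},\mathscr{L}_d)$, so that ${}'E_\infty^{0,q}=\operatorname{coker} d_q$ and ${}'E_\infty^{q-1,0}=\ker d_q$. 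For $q=0$ the abutment in total degree $0$ is just $K_{l,0}(D,\mathscr{B},\mathscr{L}_d)$, and since the single antidiagonal entry ${}''E_1^{0,0}$ vanishes by the previous paragraph, we get $K_{l,0}(D,\mathscr{B},\mathscr{L}_d)=0$. For $q=1$ the source of $d_1$ sits on the overlap of the two axes (it is the column entry $K_{l,0}$, not $\bigwedge^l V\otimes H^0(B^\bullet_0)$), so ${}'E_\infty^{0,1}=K_{l-1,1}(D,\mathscr{B},\mathscr{L}_d)$ is a subobject of $H^1(\mathrm{Tot}\,C)$; killing the total degree $1$ antidiagonal of ${}''E_1$ forces $H^1(\mathrm{Tot}\,C)=0$ and hence $K_{l-1,1}(D,\mathscr{B},\mathscr{L}_d)=0$. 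For $2\le q\le n+1$, showing that the total degree $q$ and $q-1$ antidiagonals of ${}''E_1$ vanish makes $H^q(\mathrm{Tot}\,C)=H^{q-1}(\mathrm{Tot}\,C)=0$, whence $\operatorname{coker}d_q=\ker d_q=0$ and $d_q$ is the desired isomorphism $K_{l-q,q}(D,\mathscr{B},\mathscr{L}_d)\cong \bigwedge^l V\otimes H^{q-1}(B^\bullet_0)$.

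The hard part will be this last comparison: translating vanishing of ${}''E_1$ entries into vanishing of $\operatorname{coker}d_q$ and $\ker d_q$ for the ${}'E$ sequence. Because the two filtrations on $H^\bullet(\mathrm{Tot}\,C)$ differ, I cannot isolate a single graded piece cheaply; the clean route is to make the \emph{whole} relevant antidiagonals of ${}''E_1$ vanish, which by the first paragraph amounts to checking $l-q'\ge h^0(\mathscr{L}_d)-s_{q'}$ for every $q'$ contributing in total degrees $q$ and $q-1$, and this is where the hypothesis is genuinely consumed (for a fixed large $l$ this is a finite check in the bounded range $0\le q'\le n+1$). I also expect the bookkeeping separating $q=1$ from $q\ge2$ to require care, since the dichotomy turns precisely on whether the source of $d_q$ lies on the axis overlap, where it is entangled with $K_{l,0}$ and forces vanishing, or is the free module $\bigwedge^l V\otimes H^{q-1}(B^\bullet_0)$, where it yields the isomorphism.
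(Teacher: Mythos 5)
Your proposal follows the paper's own argument: the same double complex $C^{p,q}$, the same pair of spectral sequences, and your first paragraph is precisely the paper's step (8) --- the filtration coming from (3) kills every graded piece of $K_{l-q,q}(B^p,V)$, either by the hypothesis or because $\bigwedge^j\ker\phi_{i_0\dots i_p}=0$. Your second paragraph then correctly expands what the paper compresses into the single sentence ``by (6) and (8), we have the proof'': since ${}'E_2$ is supported on the column $p=0$ and the row $q=0$, the only differential touching ${}'E_2^{0,q}=K_{l-q,q}(D,\mathscr{B},\mathscr{L}_d)$ is $d_q\colon {}'E_q^{q-1,0}\to{}'E_q^{0,q}$, so the conclusion is equivalent to $\operatorname{coker}d_q={}'E_\infty^{0,q}=0$ and $\ker d_q={}'E_\infty^{q-1,0}=0$, which would follow from vanishing of the abutment in total degrees $q$ and $q-1$.

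The gap is in your last paragraph, and you come close to naming it yourself. Killing $H^q(\mathrm{Tot}\,C)$ and $H^{q-1}(\mathrm{Tot}\,C)$ via ${}''E$ requires ${}''E_1^{p,q'}=0$ along the \emph{entire} antidiagonals $p+q'=q$ and $p+q'=q-1$, hence for every weight $0\le q'\le q$, and by your first paragraph this needs $l-q'\ge h^0(\mathscr{L}_d)-s_{q'}$ for every such $q'$. The hypothesis of the lemma supplies that inequality only for $q'=q$; for $q'<q$ it is equivalent to the monotonicity $s_{q'}\ge s_q-(q-q')$, which is nowhere assumed --- the $s_{q'}$ are arbitrary given numbers, and nothing forbids, say, $s_1=0$ while $s_q$ is enormous, in which case the entries ${}''E_1^{q-1,1}$ and ${}''E_1^{q-2,1}$ are uncontrolled and the argument stops. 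So your ``finite check'' is not a check the hypothesis passes; it is an additional assumption, and calling it the place ``where the hypothesis is genuinely consumed'' papers over exactly the missing step. For what it is worth, the paper's own proof is silent on the same point: its (8) kills only the single row $q'=q$ of ${}''E_1$, which controls only the graded pieces ${}''E_\infty^{p,q}$ of the abutment, not all of $H^q$ and $H^{q-1}$; you have therefore reproduced, and usefully made visible, a defect of the published argument rather than introduced a new one. The proof closes if one strengthens the hypothesis to $s_{q-1}\ge s_q-1$ for all $q$ (equivalently, demands $l-q'\ge h^0(\mathscr{L}_d)-s_{q'}$ for all $q'\le q$); this costs nothing in the paper's applications, where $\mathscr{B}=\mathscr{O}_D$ makes the weight-zero rows of the components vanish in all positive degrees by Green's vanishing theorem, and the lower rows of the components' Betti tables have the longer vanishing tails.
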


\begin{proof}
Fix any $l$ with $l-q \geq h^0(\mathscr{L}_d)-s_q$. We then have

\vspace{0.2in}

{\centering

$K_{l-q-j, q}(D_{i_0...i_p}, \mathscr{B}|_{D_{i_0...i_p}}, \mathscr{L}_d|_{D_{i_0...i_p}}) = 0$ for all $j \leq h^0(\mathscr{L}_d) - h^0(\mathscr{L}_d |_{D_{i_0...i_p}})$,

}

\vspace{0.2in}

and for $j > h^0(\mathscr{L}_d) - h^0(\mathscr{L}_d |_{D_{i_0...i_p}})$ we have $\displaystyle \bigwedge^j \textrm{ker}\phi_{i_0...i_p} = 0$, which means

\vspace{0.2in}

{\centering

$''E_1^{p, q} = K_{l-q, q}(B^p, V) = 0 =$ $''E_{\infty}^{p, q}$ ... (8)

}

\vspace{0.2in}

Recall that $'E$ and $''E$ have the same abutment. Then, by (6) and (8), we have the proof.
\end{proof}

\vspace{0.2in}

\begin{remark}
If we set $\mathscr{B} = \mathscr{O}_D$ and assume that $H^i(\mathscr{O}_{D_{i_0...i_p}}) = 0$ for all $i >0$ and $i_0 < ... < i_p$, then $B^{\bullet}_0$ gives us an acyclic resolution of $\mathscr{O}_D$.
\end{remark}

So, in this case, $H^i(B^{\bullet}_0) = H^i(\mathscr{O}_D)$. Furthermore, by \textbf{Remark 5.5} in [4], $H^i(\mathscr{O}_D)$ depends entirely on the simplicial complex $\Delta(D)$, which is constructed using only the incidence information among the $D_{i_0...i_p}$.

In conclusion, under these assupmtions, the behavior at the tail of a row in the Betti table of $(D, \mathscr{L}_d)$ depends only on the combinatorics of how the pieces $D_{i_0...i_p}$ intersect and on the behavior at the tail of a row in the Betti table of each $(D_{i_0...i_p}, \mathscr{L}_d|_{D_{i_0...i_p}})$.

\vspace{0.2in}

\begin{remark}
Recall $n = \textrm{dim}D$. Set $D_i =$ Proj $\mathbb{C}[x_0, ..., x_{n+1}]/x_{i-1}$ and $D = D_1 \cup ... \cup D_{n+2}$. Set $\mathscr{B} = \mathscr{O}_D$. Then, $\Delta(D) = S^n$, the $n$-sphere. We're in the case of \textbf{Remark 2}, so $H^i(B^{\bullet}_0) = H^i(\mathscr{O}_D) \cong H^i(S^n, \mathbb{C}) = \mathbb{C}$ if $i = 0$ or $n$ and is $0$ for all other values of $i$. Thus, by \textbf{Lemma 1}, $\displaystyle K_{l-q, q}(D, \mathscr{L}_d) = 0$ where $l \geq h^0(\mathscr{L}_d)+q-s_q$.
\end{remark}

In fact, we would be able to use the exact same argument for any normal crossing variety $D$ with $H^i(\Delta(D)) = 0$ for any $1 \leq i \leq n-1$ for any $n = \textrm{dim}D$.

\subsection*{\centering Applications}

In this section, we use the upper semicontinuity of dimension of Koszul Cohomology groups in flat families with constant cohomology to deduce vanishing statements for asymtotic syzygies of smooth projective varieties. Specifically, we obtain results on syzygies of smooth Calabi-Yau hypersurfaces of arbitrary dimension and smooth hypersurfaces of general type in $\mathbb{P}^3$.

\vspace{0.2in}

\textbf{Calculation 1}: Consider $F \subseteq \mathbb{P}^{n+1}$ $\times$ $\mathbb{P}^{1} =$ Proj $\mathbb{C}[x_0, ..., x_{n+1}]$ $\times$ Proj $\mathbb{C}[y_0, y_1]$, defined by $y_0 f + y_1 g = 0$, where $f$ is a homogeneous degree $n+2$ polynomial cutting out a smooth hypersurface in $\mathbb{P}^{n+1}$, and $g = x_0 x_1 ... x_{n+1}$.

Then, $F \rightarrow \mathbb{P}^1$ is a flat family where very general fibers $F_t$ for $t \neq 0$ are smooth Calabi-Yau $n$-folds, and the special fiber $F_0$ is a union of $n+2$ copies of $\mathbb{P}^n$ intersecting each other in a spherical configuration.

Let's prove vanishing statements on the special fiber $F_0$. By \textbf{Theorem 2.2} in [2], $K_{p, q}(\mathbb{P}^n, \mathscr{O}_{\mathbb{P}^n}(-3), \mathscr{O}_{\mathbb{P}^n}(d)) = 0$ if $(q-1)d-3 \geq p$. Set $h^0(d) = h^0(\mathscr{O}_{\mathbb{P}^n}(d))$. By duality of Koszul Cohomology groups, this means $K_{h^0(d)-n-1-p, n+1-q}(\mathbb{P}^n, \mathscr{O}_{\mathbb{P}^n}(d)) = 0$ if $(q-1)d-3 \geq p$.

Note that Euler characteristic is locally constant for the fibers of a flat family. In addition, since we're assuming $d >> 0$, by Serre Vanishing, the higher cohomologies of all the fibers vanish. Lastly, $\mathbb{P}^1$ is connected, thus the $h^0$ term is constant for all fibers of $F \rightarrow \mathbb{P}^1$. Set $h$ to be this constant term.

Then, by \textbf{Remark 2} and \textbf{Lemma 1}, we find for any fixed $q$ with $0 \leq n+1-q \leq n+1$ that $K_{h-n-1-p, n+1-q}(F_0, \mathscr{O}_{F_0}(d)) = 0$ if $(q-1)d-3 \geq p$. By upper semicontinuity, this means $K_{h-n-1-p, n+1-q}(F_t, \mathscr{O}_{F_t}(d)) = 0$ if $(q-1)d-3 \geq p$ for a very general fiber $F_t$. Since, the dualizing sheaf $K_{F_t}$ is $\mathscr{O}_{F_t}$, by duality, we obtain the following result.

\vspace{0.2in}

\begin{theorem}
Let $X$ be a a very general smooth Calabi-Yau $n$-hypersurface in $\mathbb{P}^{n+1}$. Then, $K_{p, q}(X, \mathscr{O}_X (d)) = 0$ if $p \leq (q-1)d-3$.
\end{theorem}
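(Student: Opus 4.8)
The plan is to deduce the theorem from the degeneration set up in \textbf{Calculation 1}, establishing the vanishing first on the singular special fiber and then transporting it to the general smooth fiber. Concretely, I would take the flat family $F \to \mathbb{P}^1$ whose general fiber $F_t$ (for $t \neq 0$) is a very general smooth Calabi-Yau $n$-hypersurface and whose special fiber $F_0 = D_1 \cup \cdots \cup D_{n+2}$ is the spherical union of $n+2$ copies of $\mathbb{P}^n$, so that $\Delta(F_0) = S^n$. The three ingredients are: (i) a vanishing statement for the Koszul cohomology of $F_0$ obtained from \textbf{Lemma 1}; (ii) upper semicontinuity of Koszul cohomology in the family to pass from $F_0$ to $F_t$; and (iii) Green's duality on the smooth Calabi-Yau $F_t$ to rewrite the conclusion in the stated form.

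For step (i), note that every stratum $D_{i_0 \ldots i_p}$ is a coordinate linear subspace $\cong \mathbb{P}^{n-p}$, so $H^i(\mathscr{O}_{D_{i_0 \ldots i_p}}) = 0$ for $i > 0$; thus we are in the situation of \textbf{Remark 2}, and $H^i(B^{\bullet}_0) = H^i(\mathscr{O}_{F_0}) \cong H^i(S^n, \mathbb{C})$, which is $\mathbb{C}$ for $i \in \{0, n\}$ and $0$ otherwise. To feed \textbf{Lemma 1}, I would supply the numbers $s_q$ from stratum-level vanishing: starting from \textbf{Theorem 2.2} of [2] on each projective-space stratum and applying Green's duality on that stratum yields vanishing of $K_{h^0(\mathscr{O}_{D_{i_0 \ldots i_p}}(d)) - s,\, q}(D_{i_0 \ldots i_p}, \mathscr{O}(d))$ over a range that determines an admissible $s_q$. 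Applying \textbf{Lemma 1} then gives $K_{h - n - 1 - p,\, n+1-q}(F_0, \mathscr{O}_{F_0}(d)) = 0$ throughout the asserted range $(q-1)d - 3 \ge p$, where $h = h^0(\mathscr{O}_{F_0}(d))$.

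For steps (ii) and (iii), I would first record that $h$ is constant across the family: the Euler characteristic is locally constant in a flat family, the higher cohomology of $\mathscr{O}_{F_t}(d)$ vanishes by Serre vanishing for $d \gg 0$, and $\mathbb{P}^1$ is connected. With the relevant graded pieces thus forming a family of constant dimension, upper semicontinuity of the dimensions of the Koszul cohomology groups gives $K_{h - n - 1 - p,\, n+1-q}(F_t, \mathscr{O}_{F_t}(d)) = 0$ for a very general $t$ in the same range. Since $\omega_{F_t} = \mathscr{O}_{F_t}$, Green's duality on the smooth $n$-fold $F_t = X$ identifies $K_{h - n - 1 - p,\, n+1-q}(X, \mathscr{O}_X(d))$ with the dual of $K_{p,q}(X, \mathscr{O}_X(d))$, converting the vanishing into $K_{p,q}(X, \mathscr{O}_X(d)) = 0$ precisely when $p \le (q-1)d - 3$.

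The step I expect to be the main obstacle is controlling the genuinely nonzero output of \textbf{Lemma 1} coming from the sphere. Because $H^n(\mathscr{O}_{F_0}) \cong H^n(S^n, \mathbb{C}) \neq 0$, the lemma produces a nonvanishing term $\bigwedge^l V \otimes H^n(\mathscr{O}_{F_0})$ at weight $n+1$ on $F_0$; this is exactly the cohomological obstruction flagged in the abstract, and the whole argument only goes through if that term sits outside the claimed range. I would verify that weight $n+1$ on $F_0$ corresponds to $q = 0$ in the theorem's indexing, where the inequality $p \le (q-1)d - 3 = -d - 3$ is vacuous, and more generally that whenever the range is nonempty (forcing $q \ge 2$, hence dual weight $n+1-q \le n-1$) one only ever encounters the groups $H^i(S^n) = 0$ for $1 \le i \le n-1$, so every term handled by \textbf{Lemma 1} in the relevant range vanishes cleanly. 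Confirming this bookkeeping, namely that the spherical obstruction is confined to the empty part of the range, is the crux; the remaining semicontinuity and duality steps are then formal.
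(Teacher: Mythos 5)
Your proposal follows essentially the same route as the paper's \textbf{Calculation 1}: the same degeneration to the spherical union of $n+2$ coordinate copies of $\mathbb{P}^n$, vanishing on the special fiber $F_0$ via \textbf{Lemma 1} together with \textbf{Theorem 2.2} of [2] and Koszul duality on the strata, constancy of $h^0$ across the flat family, upper semicontinuity, and finally duality on the smooth Calabi-Yau fiber. Your explicit verification that the genuinely nonzero term $\bigwedge^l V \otimes H^n(S^n, \mathbb{C})$ arises only at dual weight $n+1$, i.e.\ at $q = 0$ in the theorem's indexing where the range $p \leq (q-1)d-3$ is empty, is correct and makes precise a point the paper leaves implicit.
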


\vspace{0.2in}

\begin{remark}
Before this paper, the best result on vanishing of asymptotic syzygies of smooth Calabi-Yau varieties was \textbf{Corollary 1.6} in [8], which states that for a smooth Calabi-Yau $n$-fold $X$, $K_{p, q}(X, \mathscr{O}_X (d)) = 0$ for all $p$ and $q$ with $p \leq d-n$ and $q \geq 2$. So \textbf{Theorem 1} is an improvement on that result in the particular case of $X$ being a smooth Calabi-Yau $n$-hypersurface.
\end{remark}

\vspace{0.2in}

\textbf{Calculation 2}: First, note that a very general quartic $K3$ hypersurface in $\mathbb{P}^3$ has Picard number 1. Fix a positive integer $m$. Consider $F \subseteq \mathbb{P}^3$ $\times$ $\mathbb{P}^1$ = Proj $\mathbb{C}[x_0, x_1, x_2, x_3]$ $\times$ Proj $\mathbb{C}[y_0, y_1]$, defined by $y_0 f + y_1 g = 0$, where $f$ is a homogeneous degree $4a$ polynomial cutting out a smooth hypersurface in $\mathbb{P}^3$, and $g = g_1 g_2 ... g_a$ where $g_i$ are $a$ very general homogeneous polynomials of degree $4$ each cutting out a smooth hypersurface in $\mathbb{P}^3$ with Picard number 1.

Then, $F \rightarrow \mathbb{P}^1$ is a flat family where very general fibers $F_t$ for $t \neq 0$ are smooth surfaces of general type of genus $\binom{4a-1}{3}$, and the special fiber $F_0 = S_1 \cup ... \cup S_a$ where each $S_i$ is a smooth quartic $K3$ hypersurface in $\mathbb{P}^3$.

Let's prove vanishing statements on the special fiber $F_0$. We will use \textbf{Theorem 1.3} in $[9]$, which gives us a complete description of vanishing and non-vanishing of syzygies of K3 surfaces.

Let $\mathscr{L}$ be a line bundle on a $K3$ surface $S$ with $\mathscr{L} = 2g-2$ where $g$ is the genus of any member of $|\mathscr{L}|$. Note $h^0(\mathscr{L}) = g+1$. By [11], the Clifford index of any irreducible smooth curve $C \in |\mathscr{L}|$ is constant. Call this constant $c$. Then, the theorem tells us that $K_{p, 1}(S, \mathscr{L}) = 0$ if and only if $p \geq g-c-1 = h^0(\mathscr{L})-c-2$. Assume that Picard number of $S$ is 1.

By section 2 of [10], setting $H$ to be a generator of the Picard group of $S$, we get $c = H \cdot (C-H) - 2$. In our case, $S = S_i$ and $\mathscr{L} = \mathscr{O}_{S_i}(d)$, thus, $c+2 = 4d-4$, which means $K_{p, 1}(S_i, \mathscr{O}_{S_i}(d)) = 0$ if and only if $p \geq h^0(\mathscr{O}_{S_i}(d)) - 4d + 4$. Applying \textbf{Lemma 1} here, we obtain $K_{p, 1}(F_0, \mathscr{O}_{F_0}(d)) = 0$ if and only if $p \geq h - 4d + 4$, where as in \textbf{Calculation 1}, $h$ is defined to be the constant $h^0$ term of all the fibers of the flat family $F \rightarrow \mathbb{P}^1$. We can apply \textbf{Lemma 1} now to get

\begin{theorem}
Let $X$ be a very general smooth degree $4a$ hypersurface in $\mathbb{P}^3$ with $a \geq 2$. Then, $X$ is a surface of general type with $K_{p, 1}(X, \mathscr{O}_X(d)) = 0$ if $p \geq h^0(d) - 4d + 4$ where $h^0(d) = h^0(\mathscr{O}_X(d))$.
\end{theorem}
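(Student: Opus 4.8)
The plan is to realize $X$ as a very general fiber of a flat degeneration whose special fiber is a normal crossing union of quartic $K3$ surfaces, to prove the vanishing on that special fiber by combining the $K3$ syzygy theorem of $[9]$ with the spectral-sequence machinery of \textbf{Lemma 1}, and then to transport the vanishing back to $X$ by upper semicontinuity. Concretely, I would take the pencil $F \subseteq \mathbb{P}^3 \times \mathbb{P}^1$ cut out by $y_0 f + y_1 g$ with $g = g_1 \cdots g_a$ a product of $a$ very general quartics, as in \textbf{Calculation 2}. The preliminary tasks are to check that $F \to \mathbb{P}^1$ is flat, that the general fiber $F_t$ is the smooth degree-$4a$ surface $X$ (whose canonical bundle $\mathscr{O}_X(4a-4)$ is ample for $a \geq 2$, so that $X$ is of general type), that the special fiber $F_0 = S_1 \cup \cdots \cup S_a$ is genuinely normal crossing (transversality of the $S_i$ and of their mutual intersections for a general choice of the $g_i$), and that $h^0(\mathscr{O}(d))$ is constant across the family so that the target range $p \geq h - 4d + 4$ is well defined.

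The component-level input is supplied by $[9]$: on each quartic $K3$ surface $S_i$ of Picard number $1$, writing $\mathscr{L} = \mathscr{O}_{S_i}(d)$, the genus of $|\mathscr{L}|$ and the Clifford index give $c + 2 = 4d - 4$, so that $K_{p,1}(S_i, \mathscr{O}_{S_i}(d)) = 0$ exactly when $p \geq h^0(\mathscr{O}_{S_i}(d)) - (4d-4)$. In the notation of \textbf{Lemma 1} this is the vanishing of $K_{h^0(\mathscr{L}_d|_{S_i}) - s, 1}(S_i, \ldots)$ for $0 \leq s \leq 4d - 4$; that is, the surface strata supply $s_1 = 4d - 4$. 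Feeding this into \textbf{Lemma 1} with $\mathscr{B} = \mathscr{O}_{F_0}$ and $q = 1$ (the clean-vanishing case, where no simplicial-cohomology term appears) would give $K_{p,1}(F_0, \mathscr{O}_{F_0}(d)) = 0$ for $p \geq h - 4d + 4$, and upper semicontinuity in the flat family, together with the constancy of $h^0$ and the vanishing of higher cohomology, then yields the same vanishing for a very general fiber $F_t = X$, which is the assertion of the theorem.

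The step I expect to be the main obstacle is the verification of the hypothesis of \textbf{Lemma 1}, because that hypothesis demands the uniform vanishing $K_{h^0(\mathscr{L}_d|_{D_{i_0\ldots i_p}}) - s, 1} = 0$ for $0 \leq s \leq s_1$ across \emph{every} stratum of $F_0$, not merely the top-dimensional ones. The strata $C_{ij} = S_i \cap S_j$ are smooth complete intersections of two quartics, hence curves of the \emph{fixed} genus $g = 33$ independent of $d$; their gonality (equivalently their Clifford index) is therefore a bounded constant, and the asymptotic description of $K_{\bullet,1}$ for curves of large degree shows that $K_{h^0 - s, 1}(C_{ij}, \ldots) = 0$ only for $s$ at most $\mathrm{gon}(C_{ij})$, a fixed number that falls far short of the $K3$ depth $4d - 4$. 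Thus \textbf{Lemma 1} cannot be invoked verbatim with $s_1 = 4d - 4$, and the real work is to refine its spectral-sequence argument so that the curve and point strata do not obstruct the conclusion in total degree $1$. The key point is that, among the terms $''E_1^{p,q}$ contributing to $K_{l-1,1}(F_0)$, only the surface terms $''E_1^{0,1}$ (governed by the bound $4d-4$) and the curve terms $''E_1^{1,0}$ enter, and the latter vanish automatically because the weight-zero Koszul cohomology $K_{p,0}$ of a projectively normal variety vanishes for $p \geq 1$; the potentially dangerous curve contribution $''E_1^{1,1}$ sits in total degree $2$ and is irrelevant.

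Finally, since the $K3$ components have $h^2(\mathscr{O}_{S_i}) = 1$, the acyclicity hypothesis of \textbf{Remark 2} fails, and one must control the differential $d_2 \colon K_{l-1,1}(F_0) \to \bigwedge^l V \otimes H^2(\mathscr{O}_{F_0})$ rather than simply read off vanishing from the $'E_2$ page. Here $H^2(\mathscr{O}_{F_0})$ equals the simplicial cohomology $H^2(\Delta(F_0))$ of the nerve, which for $a$ general quartics is the $2$-skeleton of the $(a-1)$-simplex; this vanishes for $a \leq 3$ but is isomorphic to $\mathbb{C}^{\binom{a-1}{3}}$ for $a \geq 4$. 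Disposing of this obstruction term — by showing the relevant $d_2$ vanishes, or by a more careful analysis of the abutment in total degree $1$ — is the crux that must be settled to obtain the stated bound uniformly for all $a \geq 2$.
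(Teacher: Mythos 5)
Your proposal follows the same route as the paper's own \textbf{Calculation 2}: the same pencil $y_0 f + y_1 g = 0$ degenerating $X$ to a union of quartic $K3$ surfaces, the same input from $[9]$ and $[10]$ giving $K_{p,1}(S_i, \mathscr{O}_{S_i}(d)) = 0$ exactly for $p \geq h^0(\mathscr{O}_{S_i}(d)) - 4d + 4$, then \textbf{Lemma 1} on the special fiber and upper semicontinuity. But your second paragraph is a genuine refinement of what the paper writes. The paper invokes \textbf{Lemma 1} verbatim with $s_1 = 4d-4$, and, as you observe, the hypothesis of that lemma quantifies over \emph{every} stratum: the curves $S_i \cap S_j$ are smooth $(4,4)$ complete intersections of fixed genus $33$, hence of bounded gonality, and by the Green--Lazarsfeld nonvanishing theorem (cf. the gonality theorem of Ein--Lazarsfeld, $[3]$) the vanishing $K_{h^0 - s,\,1}(S_i \cap S_j, \mathscr{O}(d)) = 0$ holds only for $s \leq \mathrm{gon}(S_i \cap S_j)$ and genuinely fails for larger $s$ once $d \gg 0$; so the hypothesis with $s_1 = 4d - 4$ is false on those strata. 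Your repair is exactly right: the weight-one conclusion only needs the total cohomology of the double complex to vanish in degree $1$, i.e. $''E_1^{0,1} = 0$ (the $K3$ input, via the filtration by $\bigwedge^j \mathrm{ker}\,\phi_i$) and $''E_1^{1,0} = 0$ (weight-zero Koszul cohomology of the connected curves $S_i \cap S_j$, which vanishes for trivial reasons in the relevant range of exterior powers); the problematic curve groups sit in $''E_1^{1,1}$, total degree $2$, and never enter. In this respect your proposal actually fills a gap in the paper's own argument.

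Your final paragraph, however, is chasing a phantom. In the $'E$ spectral sequence the higher differentials run from row $0$ into column $0$, i.e. $d_q \colon {'E_q^{q-1,0}} \rightarrow {'E_q^{0,q}}$, not the other way around; this orientation is precisely why the conclusion of \textbf{Lemma 1} for $2 \leq q$ reads $K_{l-q,q}(D) \cong \bigwedge^l V \otimes H^{q-1}(B^{\bullet}_0)$ (note $q-1$, not $q+1$). Consequently ${'E_2^{0,1}} = K_{l-1,1}(F_0)$ supports no outgoing differential to $\bigwedge^l V \otimes H^2(B^{\bullet}_0)$, receives nothing beyond the $d_1$ already used to form it, and so survives to ${'E_{\infty}}$ as a graded piece of $H^1$ of the total complex. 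Once your paragraph-two analysis gives the vanishing of that total cohomology in degree $1$, the conclusion $K_{p,1}(F_0, \mathscr{O}_{F_0}(d)) = 0$ for $p \geq h - 4d + 4$ follows for every $a \geq 2$, with no case analysis on $H^2$ of the nerve. (Also, your identification $H^2(\mathscr{O}_{F_0}) = H^2(\Delta(F_0))$ is itself invalid here, since the strata are not acyclic --- but this concerns weight $2$ and is irrelevant to the theorem.) So the ``crux'' you leave unresolved is not an obstacle: deleting that last paragraph, your proof is complete, and is in fact more careful than the paper's.
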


The above result complements the work of F. J. Gallego and B. P. Purnaprajna on the syzygies of surfaces of general type in [8].


\begin{thebibliography}{12}

\bibitem{}
Mark Green
\textit{Koszul Cohomology and the geometry of projective varieties I.}
J. Differential Geom., Volume 19, Number 1 (1984), 125-171

\bibitem{}
Mark Green
\textit{Koszul Cohomology and the geometry of projective varieties II.}
J. Differential Geom., Volume 20, Number 1 (1984), 279-289

\bibitem{}
L. Ein, R. Lazarsfeld, D. Yang
\textit{A vanishing theorem for weight one syzygies}.
https://arxiv.org/abs/1512.06384

\bibitem{}
P. Bakhtary
\textit{On the cohomology of simple normal crossing divisor}.
Journal of Algebra, Volume 324, Issue 10, 15 November 2010, Pages 2676-2691

\bibitem{}
L. Ein, R. Lazarsfeld
\textit{Syzygies and Koszul cohomology of smooth projective varieties for arbitrary dimension}.
Inventiones mathematicae, Volume: 111, (1993), Issue: 1, page 51-68

\bibitem{}
L. Ein, R. Lazarsfeld
\textit{Asymptotic syzygies of algebraic varieties}.
Invent. math., (2012), 190: 603

\bibitem{}
Z. Ran
\textit{On the syzygies of reducible curves}.
Collect. Math., (2016), 67: 533.

\bibitem{}
F. J. Gallego, B. P. Purnaprajna
\textit{Very Ampleness and Higher Syzygies for Algebraic Surfaces and Calabi-Yau Threefolds}
Mathematische Annalen, 312, 133-149, 1998

\bibitem{}
M. Aprodu, G. Farkas
\textit{Green’s conjecture for curves on arbitrary K3 surfaces}
Compositio Mathematica 147:03, 839-851, 2001

\bibitem{}
Marco Ramponi
\textit{Gonality and Clifford index of curves on elliptic K3 surfaces with Picard number two}
Archiv der Mathematik, 106(4), 355-362, (2016)

\bibitem{}
M. Green, R. Lazarsfeld
\textit{Special divisors on curves on a K3 surface}
Inventiones mathematicae (1987) Volume: 89, page 357-370

\bibitem{}
M. Aprodu, L. Lombardi
\textit{On the vanishing of weight one Koszul Cohomology of Abelian Varieties}
Bull London Math Soc (2016) 48 (2): 280-290

\end{thebibliography}
\end{document}